\begin{document}

\newtheorem{theorem}{Theorem}
\newtheorem{lemma}{Lemma}
\newtheorem{corollary}{Corollary}
\newtheorem{proposition}[theorem]{Proposition}

\theoremstyle{definition}
\newtheorem*{definition}{Definition}
\newtheorem*{remark}{Remark}
\newtheorem*{example}{Example}


\def\cA{\mathcal A}
\def\cB{\mathcal B}
\def\cC{\mathcal C}
\def\cD{\mathcal D}
\def\cE{\mathcal E}
\def\cF{\mathcal F}
\def\cG{\mathcal G}
\def\cH{\mathcal H}
\def\cI{\mathcal I}
\def\cJ{\mathcal J}
\def\cK{\mathcal K}
\def\cL{\mathcal L}
\def\cM{\mathcal M}
\def\cN{\mathcal N}
\def\cO{\mathcal O}
\def\cP{\mathcal P}
\def\cQ{\mathcal Q}
\def\cR{\mathcal R}
\def\cS{\mathcal S}
\def\cU{\mathcal U}
\def\cT{\mathcal T}
\def\cV{\mathcal V}
\def\cW{\mathcal W}
\def\cX{\mathcal X}
\def\cY{\mathcal Y}
\def\cZ{\mathcal Z}


\def\sA{\mathscr A}
\def\sB{\mathscr B}
\def\sC{\mathscr C}
\def\sD{\mathscr D}
\def\sE{\mathscr E}
\def\sF{\mathscr F}
\def\sG{\mathscr G}
\def\sH{\mathscr H}
\def\sI{\mathscr I}
\def\sJ{\mathscr J}
\def\sK{\mathscr K}
\def\sL{\mathscr L}
\def\sM{\mathscr M}
\def\sN{\mathscr N}
\def\sO{\mathscr O}
\def\sP{\mathscr P}
\def\sQ{\mathscr Q}
\def\sR{\mathscr R}
\def\sS{\mathscr S}
\def\sU{\mathscr U}
\def\sT{\mathscr T}
\def\sV{\mathscr V}
\def\sW{\mathscr W}
\def\sX{\mathscr X}
\def\sY{\mathscr Y}
\def\sZ{\mathscr Z}


\def\fA{\mathfrak A}
\def\fB{\mathfrak B}
\def\fC{\mathfrak C}
\def\fD{\mathfrak D}
\def\fE{\mathfrak E}
\def\fF{\mathfrak F}
\def\fG{\mathfrak G}
\def\fH{\mathfrak H}
\def\fI{\mathfrak I}
\def\fJ{\mathfrak J}
\def\fK{\mathfrak K}
\def\fL{\mathfrak L}
\def\fM{\mathfrak M}
\def\fN{\mathfrak N}
\def\fO{\mathfrak O}
\def\fP{\mathfrak P}
\def\fQ{\mathfrak Q}
\def\fR{\mathfrak R}
\def\fS{\mathfrak S}
\def\fU{\mathfrak U}
\def\fT{\mathfrak T}
\def\fV{\mathfrak V}
\def\fW{\mathfrak W}
\def\fX{\mathfrak X}
\def\fY{\mathfrak Y}
\def\fZ{\mathfrak Z}


\def\C{{\mathbb C}}
\def\F{{\mathbb F}}
\def\K{{\mathbb K}}
\def\L{{\mathbb L}}
\def\N{{\mathbb N}}
\def\Q{{\mathbb Q}}
\def\R{{\mathbb R}}
\def\Z{{\mathbb Z}}
\def\E{{\mathbb E}}
\def\T{{\mathbb T}}
\def\P{{\mathbb P}}
\def\D{{\mathbb D}}


\def\eps{\varepsilon}
\def\mand{\qquad\mbox{and}\qquad}
\def\\{\cr}
\def\({\left(}
\def\){\right)}
\def\[{\left[}
\def\]{\right]}
\def\<{\langle}
\def\>{\rangle}
\def\fl#1{\left\lfloor#1\right\rfloor}
\def\rf#1{\left\lceil#1\right\rceil}
\def\le{\leqslant}
\def\ge{\geqslant}
\def\ds{\displaystyle}

\def\xxx{\vskip5pt\hrule\vskip5pt}
\def\yyy{\vskip5pt\hrule\vskip2pt\hrule\vskip5pt}
\def\imhere{ \xxx\centerline{\sc I'm here}\xxx }

\newcommand{\comm}[1]{\marginpar{
\vskip-\baselineskip \raggedright\footnotesize
\itshape\hrule\smallskip#1\par\smallskip\hrule}}


\def\e{\mathbf{e}}
\def\sPrc{{\displaystyle \sP_r^{(c)}}}

\title{\bf Zeros of Complex Random Polynomials Spanned by Bergman Polynomials
}

\author{
\sc Marianela Landi, Kayla Johnson, Garrett Moseley,\\ and \sc Aaron Yeager }

\date{}
\newcommand{\Addresses}{{
  \bigskip
  \footnotesize

  Marianela Landi,
  \textit{E-mail address}: \texttt{920131101@student.ccga.edu}
  \vspace{0.1in}

  Kayla Johnson, 
  \textit{E-mail address}: \texttt{920117897@student.ccga.edu}
  \vspace{0.1in}

  Garrett Moseley, 
  \textit{E-mail address}: \texttt{920112221@student.ccga.edu}
\vspace{0.1in}

  Aaron Yeager, 
  \textit{E-mail address}: \texttt{ayeager@ccga.edu}
\vspace{0.1in}

\textsc{Department of Mathematics, College of Coastal Georgia,
    Brunswick, Georgia 31520}
}}

\maketitle

\begin{abstract}
We study the expected number of zeros of
$$P_n(z)=\sum_{k=0}^n\eta_kp_k(z),$$
where $\{\eta_k\}$ are complex-valued i.i.d standard Gaussian random variables, and $\{p_k(z)\}$ are polynomials orthogonal on the unit disk.  When $p_k(z)=\sqrt{(k+1)/\pi} z^k$, $k\in \{0,1,\dots, n\}$, we give an explicit formula for the expected number of zeros of $P_n(z)$ in a disk of radius $r\in (0,1)$ centered at the origin.  From our formula we establish the limiting value of the expected number of zeros, the expected number of zeros in a radially expanding disk, and show that the expected number of zeros in the unit disk is $2n/3$.  Generalizing our basis functions $\{p_k(z)\}$ to be regular in the sense of Ullman--Stahl--Totik, and that the measure of orthogonality associated to polynomials is absolutely continuous with respect to planar Lebesgue measure, we give the limiting value of the expected number of zeros of $P_n(z)$ in a disk of radius $r\in (0,1)$ centered at the origin, and show that asymptotically the expected number of zeros in the unit disk is $2n/3$.

\end{abstract}

\textbf{2020 Mathematics Subject Classification :} 30C15, 30E15, 30C40, 60B99.

\textbf{Keywords:} Random Polynomials, Bergman Polynomials, Ullman--Stahl--Totik Regular.

\section{Background}

Most students of mathematics are familiar with the idea of the roots of a polynomial and have found the roots of polynomials of low degree by hand.  With the aid of numerical algorithms and modern computers, it is an easy task to graph the roots of a polynomial of relatively large degree in the complex plane and to examine their distribution.  A central question in the field of random polynomials is how these roots are distributed on average when the polynomial is chosen at random.  For example, if a polynomial is chosen at random, how many real roots should we expect to have? These types of problems go back to pioneering work in the 1930's by Bloch and P\'olya \cite{BP}, and Littlewood and Offord  \cite{LO3}.  Applications of random polynomials are very abundant, arising in perturbation theory, the study of difference and differential equations, random matrix theory, the study of approximate solution of operator equations, the method of least squares estimates, economics, statistical communication theory, and mathematical physics.

In order to make the questions raised in the previous paragraph precise, it is necessary to decide what a random polynomial is.  A \emph{random polynomial} is a polynomial
$$p_n(z)=\sum_{k=0}^n\eta_kz^k,$$
where $\{\eta_k\}$ are random variables.  Let $N_n(S)$ denote the number of zeros of $p_n(z)$ in a set $S$.  Note that when we take $S$ to be the entire complex plane ($\C$), as the degree of $p_n(z)$ is $n$, by the Fundamental Theorem of Algebra we have $N_n(\C)=n$.  For $S\subsetneq\C$, we would like to know $N_n(S)$.  For example, we could take $S$ to be the real line ($\R$), $[-1,1]$, the unit circle ($\T$), the unit disk ($\D$), or $[-1,1]\times [-1,1]$, etc.  As the coefficients $\{\eta_k\}$ are random variables, this involves the computation of $\E[N_n(S)]$, where $\E$ denotes the expectation.

In 1943, Kac \cite{K1} produced an integral equation for the expected number of real zeros of $p_n(z)$ when the random variables $\{\eta_j\}$ are real-valued independent and identically distributed (i.i.d.) standard Gaussian.  We note that independently in 1945, while studying random noise Rice \cite{R} derived a similar formula for $\E[N_n(\R)]$ in the Gaussian setting.  After Kac established the formula for the expected number of zeros, he proved that the the number of real zeros of the random polynomial $p_n(z)$ is asymptotic to $(2/\pi)\log n$ as $n\rightarrow \infty$.

Due to the work of Kac and Rice, formulas for the density function for the expected number of zeros of a random polynomial, called the \emph{intensity function},
are known as \emph{Kac-Rice formulas}.  Thus the intensity function, which we denote as $\rho_n(z)$, is the function that satisfies
$$\E[N_n(\Omega)]=\int_{\Omega} \rho_n(z) \ dz, \ \ \ \ \text{where} \ \ \Omega \subset \C.$$

We now consider the random polynomials of the form
$$f_n(z)=\sum_{k=0}^n\eta_k z^k,$$
 where $\{\eta_k\}$ are i.i.d.~complex-valued standard Gaussian random variables.  That is, when $\eta_j=\alpha_j+i\beta_j$, where $\alpha_j$ and $\beta_j$ are i.i.d.~real-valued standard Gaussian for all $j\in \{0,1,\dots,n\}$.
The classic result of Hammersley \cite{HM}  says that the intensity function for a complex random polynomial $f_n(z)$ is given by
\begin{equation*}
\rho_n(z)=\frac{1}{\pi}\frac{1-|h_{n+1}(z)|^2}{(1-|z|^2)^2},\quad \text{where} \quad h_{n+1}(z)=\frac{(1-|z|^2)(n+1)z^n}{1-|z|^{2(n+1)}}.
\end{equation*}

As noted by Arnold \cite{Arn}, for a disk of radius $r$, denoted as $D(0,r)$, it follows that 
\begin{equation*}
\E[N_n(D(0,r))]=
\begin{cases}
\displaystyle\frac{r^2}{1-r^2}-(n+1)\frac{r^{2n+2}}{1-r^{2n+2}}, \ \ &r\in(0,1),\\[3ex]
\displaystyle\frac{n}{2}, \ \ &r=1,
\end{cases}
\end{equation*}
and
\begin{equation*}
\lim_{n\rightarrow \infty}\E[N_n(D(0,r))]=\frac{r^2}{1-r^2}, \quad \in(0,1).
\end{equation*}
When examining an expanding disk centered at the origin, Ledoan et.~al.~\cite{AL2}  proved the following scaling limit
\begin{equation*}
\lim_{n\rightarrow \infty}\frac{\E[N_n(D(0,e^{-t/2n}))]}{n}=\frac{1}{t} + \frac{1}{1-e^t}, \quad t>0.
\end{equation*}

For other early results in random polynomials, we refer the reader to the books by
 Bharucha-Reid and Sambandham \cite{BR} and Farahmand \cite{Fa}.

\section{Main Results}

In our work we study the expected number of zeros of random polynomials of the form
\begin{equation}\label{ROP2}
P_n(z)=\sum_{k=0}^n \eta_k p_k(z),
\end{equation}
where $\{\eta_k\}$ are i.i.d complex-valued standard Gaussian, and $\{p_k(z)\}$ are polynomials orthogonal on $\D$ with respect to finite positive Borel measure $\mu$.  By $\{p_k(z)\}$ being orthogonal on $\D$, we mean that they satisfy the relation
\begin{equation}\label{orthoprop}
\int_{\D}p_n(z)\overline{p_m(z)} \ d\mu(z)=
\begin{cases}
1, \ &  \text{when} \ n=m,\\
0, \  &  \text{when} \ n\neq m.
\end{cases}
\end{equation}
In this case, the spanning functions $\{p_k(z)\}$ are called \emph{Bergman polynomials}.   A random polynomial that is spanned by polynomials that satisfy an orthogonality relation is said to be a \emph{Random Orthogonal Polynomial}.

We will  assume that the measure of orthogonality $\mu$ is absolutely continuous with respect to planar Lebesgue measure $dA$, and our spanning orthogonal polynomials  
\begin{equation*}
p_k(z)=\kappa_k z^k+ \text{lower terms}, \  \ k\in\{0,1,\dots,n\},
\end{equation*}
satisfy $\kappa_k>0$ with 
\begin{equation}\label{SUT}
\lim_{k\rightarrow \infty}\kappa_k^{1/k}=1.
\end{equation}
Orthogonal polynomials on the unit disk which satisfy  \eqref{SUT} are called \emph{regular in the sense of Ullman--Stahl--Totik}.
It is known that such Bergman polynomials are used as a basis for representing
analytic functions in the unit disk (c.f. Stahl and Totik \cite{ST}).

As an example of such spanning orthogonal polynomials, consider  $p_k(z)=\sqrt{(k+1)/\pi}z^k$, $k\in \{0,1,\dots , n\}$, with weight function $h(z)=1$.  It is clear that these polynomials satisfy \eqref{SUT} and   
\begin{align*}
\int_\D \sqrt{\frac{n+1}{\pi}}z^n\overline{\sqrt{\frac{m+1}{\pi}}z^m}\ dA(z)&=\int_0^1\int_0^{2\pi} \sqrt{\frac{n+1}{\pi}}(re^{i\theta})^n\overline{\sqrt{\frac{m+1}{\pi}}(re^{i\theta})^m}\ r \ d\theta \ dr \\[2ex]
&=\frac{\sqrt{(n+1)(m+1)}}{\pi}\int_0^1\int_0^{2\pi}r^{n+m+1}e^{i\theta(n-m)} \ d\theta \ dr\\[2ex]
&=\begin{cases}
1, \ & \text{when} \ n=m,\\
0, \ & \text{when} \ n\neq m .
\end{cases}
\end{align*}

For other examples of  Bergman polynomials, we leave it to the reader to verify that
$$p_k(z)=\sqrt{ \frac{(k+1)(k+j+1) }{\pi j} }z^k, \ \ \text{where} \ j>0, \ \text{with} \ h(z)=1-|z|^{2j},$$
and 
$$p_k(z)=\frac{2}{\sqrt{\pi (k+1)(k+2)(k+3)}}\sum_{j=0}^k(j+1)z^j(1+z+\cdots + z^{k-j}), \ \ \text{with} \ h(z)=|z-1|^2,$$
also satisfy \eqref{orthoprop} and \eqref{SUT}.

Explicit formulas and asymptotics for the expected number of zeros of random polynomials spanned by polynomials orthogonal on the real-line or spanned by orthogonal polynomials the unit circle have been given much attention.  In contrast, such results for random (planar) orthogonal polynomials have not been as widely studied, and thus provides motivation for our investigation.  We remark that our study is also motivated by applications in mathematical physics, where random polynomials spanned by Bergman polynomials on the unit disk are the logarithmic derivative of the characteristic function of a random $n\times n$ unitary matrix (c.f. Diaconis and Evans \cite{DE}).

Our main tool in examining the expected number of zeros for $P_n(z)$ is a formula for the intensity function given by Yeager \cite{AY3} (and independently by Ledoan \cite{AL}).  The formula for the intensity function, which we state in the next section, allows us to obtain the following:

\begin{theorem}\label{Thm1}
Let $P_n(z)=\sum_{k=0}^n \eta_k \sqrt{\frac{k+1}{\pi}}z^k$, where $\{\eta_k\}$ are complex-valued i.i.d.~standard Gaussian.  The following formulas hold valid:
\begin{enumerate}
\item[\textup{(i)}] The intensity function can be written as
\begin{equation}\label{intbf}
\rho_n(z)=\frac{1}{\pi}\left( \frac{2}{(1-|z|^2)^2}-\frac{(n+1)(n+2)|z|^{2n}(|z|^{2n+4}-(n+2)|z|^2+n+1)}{(1+|z|^{2n+2}((n+1)|z|^2-(n+2)))^2} \right).
\end{equation}

\item[\textup{(ii)}] For $r\in (0,1)$, we have 
\begin{equation}\label{zerosindiskf}
\E[N_n(D(0,r))]=
\displaystyle \frac{2r^2}{1-r^2}-\frac{(n+1)(n+2)(1-r^2)r^{2n+2}}{1+r^{2n+2}((n+1)r^2-(n+2))}.
\end{equation}

\item[\textup{(iii)}] When $r\in (0,1)$, it follows that
\begin{equation}\label{limindisk}
\lim_{n\rightarrow \infty}\E[N_n(D(0,r))]=\frac{2r^2}{1-r^2}.
\end{equation}

\item[\textup{(iv)}] The expected number of zeros in the expanding disk $D(0,e^{-t/2n})$ possesses the property that
\begin{equation}\label{expdiskf}
\lim_{n\rightarrow \infty}\frac{\E[N_n(D(0,e^{-t/2n}))]}{n}=\frac{2}{t}+\frac{t}{1-e^{t}+t}, \quad t>0.
\end{equation}

\item[\textup{(v)}] Over the whole unit disk we have 
\begin{equation}\label{zerosinunitdisk}
\E[N_n(\D)]=\frac{2n}{3}.
\end{equation}
\end{enumerate}  
\end{theorem}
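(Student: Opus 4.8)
The plan is to obtain \eqref{zerosinunitdisk} by letting $r\to 1^-$ in the closed form \eqref{zerosindiskf} of part (ii). First I would justify that $\E[N_n(\D)]=\lim_{r\to 1^-}\E[N_n(D(0,r))]$: the disks $D(0,r)$ exhaust $\D$ as $r\uparrow 1$, and almost surely $P_n$ has no zero on the circle $|z|=1$ (the complex Gaussian law is absolutely continuous with respect to planar Lebesgue measure, so the boundary is a null event), whence continuity of measure from below gives the identity. It therefore suffices to evaluate
\begin{equation*}
\lim_{r\to 1^-}\left[\frac{2r^2}{1-r^2}-\frac{(n+1)(n+2)(1-r^2)r^{2n+2}}{1+r^{2n+2}((n+1)r^2-(n+2))}\right].
\end{equation*}

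Both summands diverge as $r\to 1^-$, so this is an indeterminate $\infty-\infty$ form and the work lies in exhibiting the cancellation. To organize it I would substitute $u=r^2$ and set $\eps=1-u\to 0^+$, writing the bracket as $\tfrac{2u}{1-u}-\tfrac{(n+1)(n+2)(1-u)u^{n+1}}{D(u)}$ with $D(u)=1+(n+1)u^{n+2}-(n+2)u^{n+1}$. The crucial structural fact is that $u=1$ is a \emph{double} zero of $D$: one checks $D(1)=D'(1)=0$ while $D''(1)=(n+1)(n+2)\neq 0$, using $D'(u)=(n+1)(n+2)u^n(u-1)$. Consequently the second summand carries a simple pole in $\eps$ whose leading term $\tfrac{2}{\eps}$ exactly matches the leading term of $\tfrac{2u}{1-u}=\tfrac{2}{\eps}-2$, so the singular parts annihilate and a finite limit survives.

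To extract that finite value I would Taylor-expand numerator and denominator about $u=1$ one order beyond the cancellation. Expanding $(1-u)u^{n+1}=\eps(1-\eps)^{n+1}$ to order $\eps^2$, and expanding $D(u)$ to order $\eps^3$ — for which I need $D'''(1)=2n(n+1)(n+2)$, yielding $D(u)=\tfrac{(n+1)(n+2)}{2}\eps^2\bigl(1-\tfrac{2n}{3}\eps+O(\eps^2)\bigr)$ — reduces the second summand to $\tfrac{2}{\eps}-\tfrac{2n}{3}-2+O(\eps)$. Subtracting this from $\tfrac{2}{\eps}-2$ leaves exactly $\tfrac{2n}{3}+O(\eps)$, and letting $\eps\to 0$ gives \eqref{zerosinunitdisk}.

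The main obstacle is purely the bookkeeping of this indeterminate form: since the leading $\tfrac{2}{\eps}$ terms cancel, the answer is governed by the \emph{next} coefficient in each expansion, so one must carry the numerator to order $\eps^2$ and the denominator to order $\eps^3$ (equivalently, compute $D''(1)$ and $D'''(1)$ correctly). An arithmetic slip at either order produces the wrong constant multiple of $n$, so the derivative evaluations of $D$ at $u=1$ are the delicate step. Alternatively one could clear denominators to a single rational function of $r$, whose denominator $(1-r^2)D(r^2)$ has a triple zero at $r=1$, and apply L'Hôpital three times; but the expansion in $\eps$ is more transparent and less error-prone.
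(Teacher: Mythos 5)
Your proposal addresses only part (v) of the theorem, taking the closed form \eqref{zerosindiskf} of part (ii) as given; parts (i)--(iv) are not touched. The paper's proof of the theorem derives (i) from the kernel representation \eqref{intthm2.1RV} of the intensity function, obtains (ii) by exhibiting an explicit antiderivative of the radial integrand, and then reads (iii)--(v) off of (ii). So as a proof of the full statement your attempt is incomplete, but as a derivation of (v) from (ii) it is correct, and I checked the delicate numbers: $D'(u)=(n+1)(n+2)u^n(u-1)$, $D''(1)=(n+1)(n+2)$, $D'''(1)=2n(n+1)(n+2)$ are all right, the second summand does expand as $\frac{2}{\eps}-\frac{2n}{3}-2+O(\eps)$, and the limit $\frac{2n}{3}$ follows. (Your preliminary reduction of $\E[N_n(\D)]$ to $\lim_{r\to 1^-}\E[N_n(D(0,r))]$ is really just monotone convergence applied to the increasing counting functions $N_n(D(0,r))\uparrow N_n(\D)$, or equivalently to the integrals of the nonnegative intensity over the exhausting disks; the almost-sure absence of zeros on $\T$ is not needed since $\D$ is open.)

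Where you genuinely diverge from the paper is in how the $\infty-\infty$ form at $r=1$ is resolved. The paper avoids any expansion by observing that the closed form \eqref{zerosindiskf} is just the summed version of
\begin{equation*}
\E[N_n(D(0,r))]=\frac{\displaystyle\sum_{k=1}^{n}k(k+1)\,r^{2k}}{1+\displaystyle\sum_{k=1}^{n}(k+1)\,r^{2k}},
\end{equation*}
a ratio of polynomials whose denominator is nonvanishing at $r=1$; setting $r=1$ and using $\sum_{k=0}^n k(k+1)=\frac{n(n+1)(n+2)}{3}$ and $\sum_{k=0}^n(k+1)=\frac{(n+1)(n+2)}{2}$ gives $\frac{2n}{3}$ immediately. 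Your Taylor expansion in $\eps=1-r^2$ reaches the same answer but must be carried one order past the cancellation of the $\frac{2}{\eps}$ poles, which is exactly the error-prone bookkeeping you flag. The paper's ``un-sum the geometric-type series'' step buys a computation-free evaluation at $r=1$ (and makes transparent why no singularity is actually present there); your expansion buys self-containedness given only the displayed closed form, at the cost of two extra derivative evaluations.
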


Generalizing our spanning orthogonal polynomials we are able to retain \eqref{limindisk} and establish \eqref{zerosinunitdisk} in an asymptotic sense.
\begin{theorem}\label{Thm2}
Let $P_n(z)=\sum_{k=0}^n \eta_k p_k(z)$, where $\{\eta_k\}$ are complex-valued i.i.d.~standard Gaussian, and $\{p_k(z)\}$ are Bergman polynomials that satisfy \eqref{SUT}  and are such that their corresponding measure of orthogonality $\mu$ is absolutely continuous with respect to planar Lebesgue measure $dA$. 
\begin{enumerate}
\item[\textup{(i)}] When $r\in (0,1)$, we have 
\begin{equation}\label{limindisk2}
\lim_{n\rightarrow \infty}\E[N_n(D(0,r))]=\frac{2r^2}{1-r^2}.
\end{equation}

\item[\textup{(ii)}] In the whole unit disk it follows that
\begin{equation}\label{asydisk}
\lim_{n\rightarrow \infty}\frac{\E[N_n(\D)]}{n}=\frac{2}{3}.
\end{equation}
\end{enumerate}
\end{theorem}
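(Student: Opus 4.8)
The plan is to treat the Kac--Rice intensity formula as the engine and to reduce every assertion to the large-$n$ behavior of the (diagonal) reproducing kernel
\begin{equation*}
K_n(z)=\sum_{k=0}^n|p_k(z)|^2.
\end{equation*}
Because the coefficients are complex Gaussian, the intensity has the form $\rho_n(z)=\tfrac1\pi\,\partial_z\partial_{\bar z}\log K_n(z)$, exactly as in the computation behind \eqref{intbf}. Writing $\partial_z\partial_{\bar z}=\tfrac14\Delta$ and invoking the divergence theorem on the centered disk converts the area integral of $\rho_n$ into a boundary integral, giving
\begin{equation*}
\E[N_n(D(0,r))]=\frac1\pi\int_{D(0,r)}\partial_z\partial_{\bar z}\log K_n\,dA=\frac{r}{2}\,\frac{d}{dr}\Lambda_n(r),\qquad \Lambda_n(r):=\frac1{2\pi}\int_0^{2\pi}\log K_n(re^{i\theta})\,d\theta.
\end{equation*}
Thus both parts reduce to understanding the circular average $\Lambda_n(r)$ and its radial derivative, first for fixed $r<1$ and then in the limit $r\to1^-$.

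For part (i) I would fix $r\in(0,1)$ and show that on the compact set $\overline{D(0,r)}$ the partial sums $K_n$ converge, together with all derivatives, to the full reproducing kernel $K_\infty=\sum_{k\ge0}|p_k|^2$ of the Bergman space of $\mu$; the uniform convergence lets me pass the limit through, so that $\lim_n\E[N_n(D(0,r))]=\tfrac1\pi\int_{D(0,r)}\partial_z\partial_{\bar z}\log K_\infty\,dA$. The place where regularity and absolute continuity enter is the identification of the logarithmic curvature: the input I would need is that $\log K_\infty$ differs from $\log\frac{1}{\pi(1-|z|^2)^2}$ (the corresponding quantity for the flat model of Theorem \ref{Thm1}) by a harmonic function, which $\partial_z\partial_{\bar z}$ annihilates. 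Granting this, $\partial_z\partial_{\bar z}\log K_\infty=2/(1-|z|^2)^2$ and integrating over $D(0,r)$ reproduces $2r^2/(1-r^2)$, which is \eqref{limindisk2}.

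For part (ii) the disk is exhausted to its boundary, where $K_n$ no longer converges and the leading coefficients $\kappa_k$ dictate the growth. In the transparent radial case the reduction above collapses to an exact weighted mean, $\E[N_n(\D)]=\big(\sum_{k=0}^n k\,\kappa_k^2\big)\big/\big(\sum_{k=0}^n \kappa_k^2\big)$, which is asymptotic to $2n/3$ once $\kappa_k^2$ matches, to first order, the growth of the model basis $\sqrt{(k+1)/\pi}\,z^k$. In general $\Lambda_n'(r)=\tfrac1{2\pi}\int_0^{2\pi}\partial_r K_n/K_n\,d\theta$ must be analyzed as $r\to1^-$, and the role of \eqref{SUT} together with the absolute continuity of $\mu$ is precisely to force the boundary size of the $p_k$ to match the model to leading order, so that the resulting average of the index is asymptotic to $\tfrac23 n$; dividing by $n$ yields \eqref{asydisk}.

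The main obstacle is this asymptotic analysis of $K_n$ at the unit circle. Away from the boundary the convergence $K_n\to K_\infty$ is soft and essentially automatic, but the universal constant $2/3$ (and, for part (i), the universal interior profile) is a statement about the \emph{first-order} growth of $\kappa_k$ and of the boundary norms of the $p_k$. Extracting this sharp information from the purely root-level hypothesis \eqref{SUT}, rather than from a stronger Szeg\H{o}-type condition, is the delicate heart of the argument, and is where the regularity theory of Bergman polynomials in the sense of Stahl--Totik must be combined with the absolute continuity of $\mu$; the analogous crux for part (i) is the harmonicity of $\log K_\infty-\log\frac{1}{\pi(1-|z|^2)^2}$.
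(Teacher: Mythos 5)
Your reduction of both parts to the large-$n$ behavior of $K_n(z,z)$ has the same skeleton as the paper's proof (the intensity \eqref{intthm2.1RV} is indeed $\tfrac{1}{\pi}\partial_z\partial_{\bar z}\log K_n(z,z)$, and \eqref{intbdy} is the boundary form of your divergence-theorem identity), but both of the inputs you yourself flag as ``the delicate heart'' are genuinely missing, and one of them is stated in a form that is false. For part (i) you ask to grant that $\log K_\infty-\log\frac{1}{\pi(1-|z|^2)^2}$ is harmonic. This does not follow from \eqref{SUT} plus absolute continuity, and it fails for concrete admissible weights: for $h(z)=1-|z|^2$ (one of the paper's own examples) the orthonormal polynomials are $\sqrt{(k+1)(k+2)/\pi}\,z^k$, so $K_\infty(z,z)=\frac{2}{\pi(1-|z|^2)^3}$ and the log-difference is $-\log(1-|z|^2)$ plus a constant, which is not harmonic. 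So this step cannot simply be granted. The paper's route for (i) is to invoke, as known, the locally uniform limit $K_n(z,z)\to\frac{1}{\pi(1-|z|^2)^2}$ on compact subsets of $\D$, differentiate it to obtain the limits of $K_n^{(0,1)}$ and $K_n^{(1,1)}$, and pass the limit through \eqref{intthm2.1RV} and then through the integral over $D(0,r)$; you would need to locate and verify the precise hypotheses under which that kernel limit (or your curvature statement) actually holds.

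For part (ii) your reduction to the $r\to1^-$ behavior of the circular average of $\partial_r K_n/K_n$ is the right shape, and your exact radial formula $\E[N_n(\D)]=\bigl(\sum_{k=0}^n k\kappa_k^2\bigr)/\bigl(\sum_{k=0}^n\kappa_k^2\bigr)$ is correct, but you do not supply the step that upgrades the root-asymptotic hypothesis \eqref{SUT} to the first-order statement $\kappa_k^2\sim ck$, which your own formula shows is exactly what is needed and which \eqref{SUT} alone certainly does not give: $\kappa_k^2=(k+1)(k+2)/\pi$ also satisfies \eqref{SUT} yet yields $3n/4$ rather than $2n/3$. The paper closes this gap by citing an external universality theorem of Lubinsky (Corollary 1.4 of \cite{Lub1}), which gives $\overline{K_n^{(0,1)}(z,z)}\,/\,(n\,K_n(z,z))\to\frac{2}{3}\overline{z}$ uniformly on open arcs of $\T$; feeding this into the contour integral \eqref{intbdy} over $\T$ and evaluating $\frac{1}{2\pi i}\int_{\T}\frac{2}{3}\overline{z}\,dz=\frac{2}{3}$ via Green's theorem finishes the argument. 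Without that citation, or an independent proof of the equivalent boundary asymptotics, your argument for \eqref{asydisk} does not close; checking the exact hypotheses under which Lubinsky's theorem applies is also where you should test whether weights such as $1-|z|^{2j}$ are actually covered.
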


\subsection{Concluding Remarks and Future Research}

Due to Corollary 3.2 given by Pritsker and Yeager in \cite{PritYgr15}, it is known that in the setting of our theorems, the zeros of $P_n(z)$ are accumulating near the unit circle as $n\rightarrow \infty$.  In light of \eqref{zerosinunitdisk} and \eqref{asydisk}, we now know that two thirds of these zeros are inside the unit disk. We conjecture that this phenomenon is occurring because the spanning functions $\{p_k(z)\}$ are orthogonal on the unit disk, and that this orthogonality is so strong it is ``grabbing" zeros.  Below we give a visual presentation of the zeros in $[-1.5,1.5]\times [-1.5,1.5]$ for 4000 different random polynomials  each of degree $25$  for the case when our spanning functions are $p_k(z)=\sqrt{(k+1)/\pi}z^k$, $k\in \{0,1,\dots, 25\}$.

\begin{figure}[t]
\centerline{\includegraphics[width=3in, height=3in]{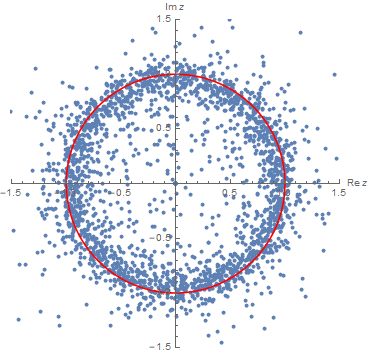}}
\caption{In red is the unit circle and in blue are the zeros in $[-1.5,15]\times [-1.5,1.5]$ for 4000 different random polynomials of the form  $P_{25}(z)=\sum_{k=0}^{25}\eta_k\sqrt{(k+1)/\pi }z^k$.}
\label{fig}
\end{figure}

\section{The Proofs}

Let $\{f_j\}$ be a sequence of polynomials such that $\deg f_j(z)=j$, for $j\in \{0,1,\dots, n\}$.  Set
\begin{equation*}
\label{PRV}
P_n(z)=\sum_{j=0}^n \eta_j f_j(z), 
\end{equation*}
where $n$ is a fixed integer, and $\{\eta_j\}$ are complex-valued i.i.d.~standard Gaussian.
The expected number of zeros of $P_n(z)$ will be given in terms of the kernels 
\begin{equation*}
K_{n}(z,z)=\sum_{j=0}^{n}f_j(z)\overline{f_j(z)},\ \ \ \ \ \ \ K_{n}^{(0,1)}(z,z)=\sum_{j=0}^{n} f_j(z)\overline{f_j^{\prime}(z)}, \ \ \ \ \ \ \ K_{n}^{(1,1)}(z,z)=\sum_{j=0}^{n}f_j^{\prime}(z)\overline{f_j^{\prime}(z)}.
\end{equation*}

For $\Omega \subset \C$ a Jordan region, in \cite{AY3} (c.f. pp. 119, 120 and 134) it was shown that 
\begin{align}
\label{intbdy}
\E[N_n(\Omega)]&=\frac{1}{2\pi i} \int_{\partial \Omega}\frac{\overline{ K_n^{(0,1)}(z,z)}}{K_n(z,z)} \ dz \\[2ex]
\nonumber
&=\int_{\Omega}\rho_n(x,y)\ dx \ dy ,\ \ \ z=x+iy,
\end{align}
where 
\begin{equation}
\label{intthm2.1RV}\rho_n(x,y)=\rho_n(z)=\frac{K_{n}^{(1,1)}(z,z)K_{n}(z,z)-\left|K_{n}^{(0,1)}(z,z)\right|^2}{\pi \left(K_{n}(z,z)\right)^2}.
\end{equation}
We remark that the intensity function $\rho_n(z)$ has no mass on the real-line.  Consequently, $\E[N_n(\R)]=0$.

Formula \eqref{intthm2.1RV} will be used in the proof of \eqref{intbf} from Theorem \ref{Thm1}.  Using \eqref{intbf}, we derive the results of \eqref{zerosindiskf}--\eqref{zerosinunitdisk}.   In the proof of Theorem \ref{Thm2}, the form of the intensity function given in \eqref{intthm2.1RV} will be applied to establish \eqref{limindisk2}, and that of \eqref{intbdy} will be used to prove \eqref{asydisk}.

\begin{proof}[Proof of Theorem \ref{Thm1}]

To establish \eqref{intbf}, observe that 
since
$$f_j(z)=\sqrt{ \frac{j+1}{\pi} }z^j, \ \ \text{for} \ \ j\in\{0,1,\dots,n\},$$
the above kernels simplify as
\begin{equation*}\label{K0}
K_n(z,z)=\sum_{k=0}^n\frac{k+1}{\pi}|z|^{2k}=\frac{1+|z|^{2(n+1)}((n+1)|z|^2-(n+2))}{\pi(1-|z|^2)^2},
\end{equation*}
\begin{equation*}\label{K01}
K_n^{(0,1)}(z,z)=\sum_{k=0}^n\frac{k(k+1)}{\pi}z^k\overline{z}^{k-1}=\frac{2zK_n(z,z)}{1-|z|^2}-\frac{(n+1)(n+2)z^{n+1}\overline{z}^n}{\pi(1-|z|^2)},
\end{equation*}
and
\begin{align*}
K_n^{(1,1)}(z,z)&=\sum_{k=0}^n\frac{k^2(k+1)}{\pi}|z|^{2(k-1)}\\[2ex]
&=\frac{2(1+2|z|^2)K_n(z,z)}{(1-|z|^2)^2}-\frac{(n+1)(n+2)|z|^{2n}(1+2|z|^2)}{\pi(1-|z|^2)^2}-\frac{n(n+1)(n+2)|z|^{2n}}{\pi(1-|z|^2)}.
\end{align*}
After much algebraic simplification we see that
\begin{align*}
K_n(z,z)K_n^{(1,1)}(z,z)-|K_n^{(0,1)}(z,z)|^2&=\frac{2K_n(z,z)^2}{(1-|z|^2)^2}-\frac{n(n+1)(n+2)|z|^{2n}K_n(z,z)}{\pi(1-|z|^2)}\\[2ex]
&\quad -\frac{(n+1)(n+2)|z|^{2n}(1-2|z|^2)K_n(z,z)}{\pi(1-|z|^2)^2}\\[2ex]
&\quad -\frac{(n+1)^2(n+2)^2|z|^{4n+2}}{\pi^2(1-|z|^2)^2}.
\end{align*}
Thus, using  \eqref{intthm2.1RV} and further simplifying we achieve
\begin{align*}
\rho_n(z)&=\frac{K_n(z,z)K_n^{(1,1)}(z,z)-|K_n^{(0,1)}(z,z)|^2}{\pi K_n(z,z)^2}\\[2ex]
&=\frac{1}{\pi}\left(\frac{2}{(1-|z|^2)^2}-\frac{(n+1)(n+2)|z|^{2n}(|z|^{2n+4}-(n+2)|z|^2+n+1)}{(1+|z|^{2n+2}((n+1)|z|^2-(n+2)))^2}\right),
\end{align*}
and thus completes the justification of \eqref{intbf}.

We will now establish the result given in \eqref{zerosindiskf}.  To this end, for $0<r<1$, using the above form of the intensity function and then changing to polar coordinates yields
\begin{align}
\nonumber
\E[N_n(D(0,r))]&=\int_{D(0,r)}\rho_n(z)\ dA(z)\\[2ex]
\nonumber
&=\frac{1}{\pi}\int_0^{2\pi}\int_0^r 
\left( \frac{2}{(1-t^2)^2}-\frac{(n+1)(n+2)t^{2n}(t^{2n+4}-(n+2)t^2+n+1)}{(1+t^{2n+2}((n+1)t^2-(n+2)))^2} \right) t  dt  d\theta\\[2ex]
\nonumber
&=\int_0^r \left( \frac{4t}{(1-t^2)^2}-\frac{2(n+1)(n+2)t^{2n+1}(t^{2n+4}-(n+2)t^2+n+1)}{(1+t^{2n+2}((n+1)t^2-(n+2)))^2} \right)  dt.
\end{align}

Observe that 
\begin{equation*}
F(t):=\frac{2}{1-t^2}-\frac{(n+2)(1-t^{2n+2})}{1+t^{2n+2}((n+1)t^2-(n+2))}
\end{equation*}
satisfies
\begin{equation*}
\frac{d}{dt}F(t)=\frac{4t}{(1-t^2)^2}-\frac{2(n+1)(n+2)t^{2n+1}(t^{2n+4}-(n+2)t^2+n+1)}{(1+t^{2n+2}((n+1)t^2-(n+2)))^2} .
\end{equation*}
Therefore, by the Fundamental Theorem of Calculus it follows that
\begin{align}
\nonumber
\E[N_n(D(0,r))]
&=\frac{2}{1-t^2}-\frac{(n+2)(1-t^{2n+2})}{1+t^{2n+2}((n+1)t^2-(n+2))}\ \Bigg|^{t=r}_{t=0}\\[2ex]
\label{zerosindiskf2}
&=\frac{2r^2}{1-r^2}-
\frac{(n+1)(n+2)(1-r^2)r^{2n+2}}{1+r^{2n+2}((n+1)r^2-(n+2))},
\end{align}
and we achieve the desired conclusion of \eqref{zerosindiskf}.

As $r\in (0,1)$ in the previous formula for the expected number of zeros in $D(0,r)$, we achieve 
\begin{equation*}
\lim_{n\rightarrow \infty}\E[N_n(D(0,r))]=\frac{2r^2}{1-r^2},
\end{equation*}
which establishes \eqref{limindisk}.

Now, observe that for $t>0$, if we set $r=  e^{-t/2n}$ in \eqref{zerosindiskf2} and scale by $n$, we have 
\begin{align*}
\lim_{n\rightarrow \infty}\frac{\E[N_n(D(0,e^{-t/2n}))]}{n}&=\lim_{n\rightarrow \infty}\left[\frac{2e^{-t/n}}{n(1-e^{-t/n})}-
\frac{(n+1)(n+2)(1-e^{-t/n})e^{-t(n+1)/n}}{n(1+e^{-t(n+1)/n}((n+1)e^{-t/n}-(n+2)))}\right]\\[2ex]
&=\frac{2}{t}+\frac{t}{1-e^{t}+t},
\end{align*}
thus showing the validity of \eqref{expdiskf}.

To prove  \eqref{zerosinunitdisk} we would like to set $r=1$ in \eqref{zerosindiskf2}, which as it is written, appears to have a singularity at $r=1$.  However, observe that \eqref{zerosindiskf2} can be written as 
\begin{equation*}
\E[N_n(D(0,r))]=\frac{2r^2}{1-r^2}-
\frac{(n+1)(n+2)(1-r^2)r^{2n+2}}{1+r^{2n+2}((n+1)r^2-(n+2))}=\displaystyle\frac{\displaystyle\sum_{k=1}^{n}k(k+1) r^{2k}}{1+\displaystyle\sum_{k=1}^{n}{(k+1)r^{2k}}}.
\end{equation*}

Therefore, taking $r=1$ in the above we achieve 
\begin{align*}
\E[N_n(\D)]=\displaystyle\frac{\displaystyle\sum_{k=0}^{n}k(k+1) }{\displaystyle\sum_{k=0}^{n}{(k+1)}}=\frac{\left (\displaystyle\frac{n(n+1)(n+2)}{3}\right)}{\left(\displaystyle \frac{(n+1)(n+2)}{2}\right)}=\frac{2n}{3},
\end{align*}
and hence completes the proof of \eqref{zerosinunitdisk}.

\end{proof}

\begin{proof}[Proof of Theorem \ref{Thm2}]

To establish \eqref{limindisk2}, we first note that it is well known that for $z\in \D$,  uniformly on compact subsets of the unit disk it follows that
\begin{align*}
\lim_{n\rightarrow \infty}K_n(z,z)=\lim_{n\rightarrow \infty}\sum_{k=0}^np_k(z)\overline{p_k(z)}=\frac{1}{\pi(1-|z|^2)^2}.
\end{align*} 
Thus taking the respective derivatives we achieve
\begin{equation}
\lim_{n\rightarrow \infty}K_n^{(0,1)}(z,z)= \frac{2z}{\pi(1-|z|^2)^3},\ \textup{and} \ \
\lim_{n\rightarrow \infty}K_n^{(1,1)}(z,z)= \frac{2+4|z|^2}{\pi(1-|z|^2)^4}.
\end{equation}
For $z\in \D$, using the above uniform limits along with the representation of the intensity function given in \eqref{intthm2.1RV} yields
\begin{align*}
\lim_{n\rightarrow \infty}\rho_n(z)=\lim_{n\rightarrow \infty}\frac{K_{n}^{(1,1)}(z,z)K_{n}(z,z)-\left|K_{n}^{(0,1)}(z,z)\right|^2}{\pi \left(K_{n}(z,z)\right)^2}
=\frac{2}{\pi (1-|z|^2)^2}.
\end{align*}

As the above limit holds uniformly on compact subsets of the unit disk, when $r\in (0,1)$, we can pass the limit through the integration to give
\begin{align*}
\lim_{n\rightarrow \infty}\E[N_n(D(0,r))]&=\int_{D(0,r)} \lim_{n\rightarrow \infty}\rho_n(z) \ dA(z)\\[2ex]
&=\int_{D(0,r)}\frac{2}{\pi (1-|z|^2)^2} \ dA(z) \\[2ex]
&=\int_0^{2 \pi} \int_0^r \frac{2}{\pi (1-t^2)^2} \ t \ dt \ d\theta \\[2ex]
&=\frac{2r^2}{1-r^2},
\end{align*}
and hence completes the desired result in \eqref{limindisk2}.

Under the hypothesis of Theorem \ref{Thm2}, due to the work of Lubinsky (c.f. Corollary 1.4 in \cite{Lub1}) we know that  uniformly for $z$ in an open arc of $\T$ we have 
\begin{equation}
\lim_{n\rightarrow \infty}\frac{\overline{K_n^{(0,1)}(z,z)}}{n \ K_n(z,z)}=\frac{2 }{3}\overline{z}.
\end{equation}
Taking the arcs $(0, \pi)$ and $(\pi,2 \pi)$, and as the intensity function has no mass on the real-line, using  \eqref{intbdy} with the above uniform limit we  achieve
\begin{align}
\nonumber
\lim_{n\rightarrow \infty}\frac{\E[N_n(\D)]}{n}&=\lim_{n\rightarrow \infty}\frac{1}{n} \frac{1}{2\pi i} \int_{\T}\frac{\overline{ K_n^{(0,1)}(z,z)}}{K_n(z,z)} \ dz \\[2ex]
\nonumber
&=\frac{1}{2\pi i} \int_{\T}\lim_{n\rightarrow \infty}\frac{\overline{ K_n^{(0,1)}(z,z)}}{n \ K_n(z,z)} \ dz \\[2ex]
\nonumber
&=\frac{1}{2\pi i} \int_{\T} \frac{2 }{3}\overline{z} \ dz \\[2ex]
\label{GT}
&=\frac{2}{3} \frac{1}{\pi}\int_{\D} \frac{d}{d\overline{z}} \overline{z} \ dA(z) \\[2ex]
\nonumber
&=\frac{2}{3} \frac{1}{\pi} \int_{\D} 1 \ dA(z) \\[2ex]
\nonumber
&=\frac{2}{3},
\end{align}
where we have used the complex version of Green's Theorem for \eqref{GT}.  Therefore we have established \eqref{asydisk}, and thus completed the proof of Theorem \ref{Thm2}.

\end{proof}

\Addresses

\end{document}